\newtheorem{theorem}{Theorem}[section]
\newtheorem{corollary}[theorem]{Corollary}
\newtheorem{lemma}[theorem]{Lemma}
\theoremstyle{remark}
\newtheorem{remark}[theorem]{Remark}
\newcommand{\p}{{{\Psi}}}
\newcommand{\E}{\mathbb{E}}
\newcommand{\h}{\mathcal{H}}
\newcommand{\prob}{\mathbb{P}}
\newcommand{\pP}{\mathcal{P}}
\newcommand{\ee}{\varepsilon}
\newcommand{\cB}{\mathcal{B}}
\newcommand{\CC}{\mathbb{C}}
\newcommand{\NN}{\mathbb{N}}
\DeclareMathOperator{\Tr}{Tr}
\DeclareMathOperator{\tr}{tr}
\DeclareMathOperator{\Ker}{{\ker}}
\title{A dynamical version of the SYK Model and the $q$-Brownian Motion}
\author{Miguel Pluma and Roland Speicher}
\date{}
\begin{document}
\maketitle
\begin{abstract}
We extend recent results on the asymptotic eigenvalue distribution of the SYK model 
to the multivariate case and relate the limit of a dynamical version of the SYK model with the $q$-Brownian motion, a 
non-commutative deformation of classical Brownian motion.
Furthermore, we extend the results for fluctuations to the multivariate setting and treat 
also higher correlation functions. The structure of our results for the sparse SYK random matrices resembles the formulas for higher order freeness for ordinary GUE random matrices.
\end{abstract}

\section{Introduction}
The SYK model was introduced by Sachdev and Ye \cite{Sachdev-1993} in 1993 as a model 
for a quantum random spin system and has attracted a lot of interest in the last few 
years since it was promoted in 2015 by Kitaev \cite{Kitaev, MalSt} to a simple model 
of quantum holography. The SYK model is a quantum mechanical model for $n$ interacting 
Majorana fermions with a random coupling for a $q_n$-interaction. In the original model, 
$q_n$ was independent of $n$ and equal to 4, but it has turned out that there are 
interesting and treatable limits for $n\to\infty$ if one also scales the number of 
Majoranas in the interaction term as $q_n\sim \lambda\sqrt n$. 

The SYK model is a kind of sparse random matrix model. It was observed, on various 
levels of physical and mathematical rigor (see, e.g., \cite{Verb,Erd-2014,Feng:2018zsx}), 
that the asymptotic eigenvalue distribution of the SYK model, depending on the parameter 
$q=e^{-2\lambda}$, is given by a $q$-deformation of the Gaussian distribution.
Such deformations have been considered before in various contexts in physics and 
mathematics. Most importantly, from our perspective, this distribution appears as the 
fixed time distribution of a non-commutative Brownian motion, realized on a $q$-deformed 
version of a Fock space, as considered in \cite{Mare-1991,BKS}. In this context there 
is a multivariate extension of the distribution from the fixed time random variable to 
the whole process. We want to investigate here, in how far there are multivariate 
extensions of the SYK model which match the distribution of the $q$-Brownian motion. 
It turns out that replacing the couplings in the SYK model by independent classical Brownian motions will do the job. This yields a dynamical SYK model which converges to the $q$-Brownian motion.

Our calculations are 
essentially adaptations of the calculations in \cite{Feng:2018zsx,Erd-2014} to the 
multivariate situation. In this context we also want to point out the appearance of the 
$q$-Gaussian distribution as a limit distribution of random matrix models in the papers 
\cite{SpCLT, Sniady, Parisi}.

It is not clear to us whether this multivariate versions have any physical relevance; 
but we want to point out that recently Berkooz and collaborators computed in 
\cite{Berk1,Berk2} the 2-point and 4-point function of the large $n$ double-scaled SYK 
model, by using also the combinatorics of such multivariate extensions. The problems 
they encounter there are related to the lack of a good analytic description of the 
distribution of the multivariate $q$-Gaussian distribution. We will say a few words on 
these problems in the final section of this paper.

We will also look on the multivariate extension for the calculation of fluctuations 
from \cite{Feng:2018tsw}, extending this also to higher correlation functions. It would be interesting to put these fluctuations into the 
setting of second and higher order freeness \cite{CMSS,MS}; however, as the random matrix models 
considered here are quite sparse they seem to be too far away from such a setting; in 
particular, the case $q=0$, which gives asymptotically the semicircular distribution has 
quite different fluctuations from the GUE, which is the ``canonical'' random matrix model 
for the semicircle. On the other hand, the form of our results in Theorem \ref{FluctuationsThm} shows still some features of ``partitioned permutations'', which are the main conceptual tools for dealing with higher order freeness in \cite{CMSS}. This points at the possibility that there might be a form of ``higher order freeness'' for sparse random matrices. This has to be investigated further.

What we are providing here is a dynamical version of the SYK model. For the relevance of embedding the SYK model into a process, let us emphasize  that having process versions of random matrix models or non-commutative distributions might be an advantage, even if one is only interested in the marginal distribution at one fixed time. Much of recent progress of Erd\"os, Yau and coworkers on properties of eigenvalue distributions relies on such a dynamical approach to random matrices, see for example \cite{EY}. Also the recent breakthrough of Driver, Hall, and Kemp for the calculation of Brown measures depends crucially on the study of the time evolution of the involved processes; for this, see the recent survey \cite{Hall}. We will be looking on our dynamical version of the SYK model from this point of view in forthcoming investigations.

\section{Preliminaries}
\subsection{Set partitions}
For any positive integers $k<l$ we define
$[k,l]:=\{k,k+1,\dots,l\}$.
The set $[1,k]$ for $k\geq1$, will be also denoted by $[k]$. Denote the set 
of \emph{partitions} of $[k]$ by $\pP(k)$.
This means that if $\pi\in\pP(k)$, then $\pi$ is a non empty set of subsets of $[k]$,
any pair $V,W\in\pi$ is disjoint as long as $V\neq W$,
and $[k]=\bigcup_{V\in\pi}V$. Elements in $\pi$ will be called \emph{blocks}. The set of 
partitions $\pP(k)$ has an order structure given as follows: for $\pi,\sigma\in\pP(k)$ 
we say $\pi\leq\sigma$ if every block of $\pi$ is contained in a block of $\sigma$.
With this order, $\pP(k)$ is a lattice, i.e., for $\pi,\sigma\in\pP(k)$, there exists a uniquely determined maximum, $\pi\vee\sigma$, and a uniquely determined minimum, $\pi\wedge\sigma$, in $\pP(k)$. It is common to denote by $1_k$ the partition in $\pP(k)$
with one block.

The size of a set $S$ will be denoted by $|S|$.\par
A basic ingredient in the construction of $q$-Gaussian variables is played by pair
partitions. The set of \emph{pair partitions} $\pP_2(k)$ on $[k]$ is defined as follows
$$\pP_2(k)=\{\pi\in\left.\pP(k)\right| \forall B\in\pi:\, |B|=2\}.$$
For a pair partition $\pi\in\pP_2(k)$, we will say that two different blocks 
$V=\{v_1<v_2\}\in\pi$ and $W=\{w_1<w_2\}\in\pi$ are \emph{crossing}, if 
$v_1<w_1<v_2<w_2$ or $w_1<v_1<w_2<v_2$.
If $\pi$ does not have a crossing we will say it is \emph{non-crossing}. 
Furthermore for $\pi\in\pP_2(k)$ we will denote by $cr(\pi)$ the number of crossing blocks
in $\pi$, i.e.,
$$cr(\pi):= \vert \{ \{V,W\} \mid V,W\in \pi, V\not= W, \text{$V$ and $W$ are crossing}\}\vert.$$

\subsection{Notation for products of non-commutative variables}\label{ProdNotation}
Consider a family $\{X_s\}_{s\in A}$ of non-commutative variables. Given 
$\alpha:[k]\rightarrow A$ we denote
\begin{equation}\label{N1}
 X_\alpha:=X_{\alpha(1)}\cdots X_{\alpha(k)}.
\end{equation}
In case we have several families of non-commutative variables $\{X^{(r)}_s\}_{s\in A}$ 
for $r\in B$ we will also use similar notation. That is, given $\alpha:[k]\rightarrow A$ 
and $\varepsilon:[k]\rightarrow B$ we denote
\begin{equation}\label{N2}
 X^\varepsilon_\alpha:=
     X^{(\varepsilon(1))}_{\alpha(1)}\cdots X^{(\varepsilon(k))}_{\alpha(k)}.
\end{equation}
It will be useful to specify the functions $\alpha:[k]\rightarrow A$ via partitions. 
For this purpose we define for every function $\alpha:[k]\rightarrow A$ between 
discrete spaces
\begin{equation*}\label{kernel}
 \Ker \alpha:=
     \{\alpha^{-1}(a)|\:a\in A\mbox{ and }\alpha^{-1}(a)\neq\emptyset\}.
\end{equation*}
Given $B\in\ker\alpha$, we will denote the common value of $\alpha$ in $B$ by
\begin{equation}\label{N3}
\alpha(B):=\alpha(b_1)=\cdots=\alpha(b_r),
\end{equation}
where $B=\{b_1,\dots,b_r\}$.

\subsection{The SYK model}
The Sachdev--Ye--Kitaev (SYK) model was introduced in \cite{Sachdev-1993} and \cite{Kitaev} 
as a model for a quantum magnet and quantum holography, respectively.
Let $n$ be an even number and consider $\psi_1,\dots,\psi_n$ Majorana fermions, i.e. 
variables which fulfill the following relations
\begin{equation}\label{fermions}
  \psi_i\psi_j+\psi_j\psi_i=2\delta_{ij}.
\end{equation}
These variables can be realized using Pauli matrices
\[
\begin{array}{ccc}
 \sigma_1=\left(\begin{array}{cc}
                 0 & 1\\
                 1 & 0
                \end{array}\right), &
 \sigma_2=\left(\begin{array}{cc}
                 0 & -i\\
                 i & 0
                \end{array}\right), &
 \sigma_3=\left(\begin{array}{cc}
                 1 & 0\\
                 0 & -1
                \end{array}\right),
\end{array}
\]
in the following fashion: for $n=2r$ each Majorana fermion is constructed as an 
$r$-fold tensor product
\begin{align*}
 \psi_1&= \sigma_1 \otimes        1 \otimes \cdots \otimes 1 & & &
          \psi_{r+1}&= \sigma_2 \otimes        1 \otimes \cdots \otimes 1\\
 \psi_2&= \sigma_3 \otimes \sigma_1 \otimes \cdots \otimes 1 & & &
          \psi_{r+2}&= \sigma_3 \otimes \sigma_2 \otimes \cdots \otimes 1\\
                          \vdots& & ,& & &\vdots\\
 \psi_r&= \sigma_3 \otimes \sigma_3 \otimes \cdots \otimes \sigma_1 & & &
          \psi_{2r}&= \sigma_3 \otimes \sigma_3 \otimes \cdots \otimes \sigma_2
\end{align*}
where the $1$ in the tensor products represent the $2\times2$ identity matrix.
In particular, for $n=2$ the above expressions reduce to  $\psi_1=\sigma_1$ and 
$\psi_2=\sigma_2$. In this way the $\psi_1,\cdots,\psi_n$ Majorana fermions are 
realized as square matrices of size $2^{\frac {n}{2}}$.\par
The SYK model is a random linear combination of products of $q_n$ many (with $1\leq q_n\leq \frac{n}{2}$) 
Majorana fermions, and is defined as
\begin{equation}\label{SYK-model}
  H_{n,q_n}:=
    \frac{i^{\lfloor\sfrac{q_n}{2}\rfloor}}{\binom{n}{q_n}^{\sfrac{1}{2}}}
       \sum_{1\leq i_1<\cdots<i_{q_n}\leq n}
           J_{i_1,\dots,i_{q_n}} 
              \psi_{i_1}\cdots\psi_{i_{q_n}},
\end{equation}
where the random coefficients $J_{i_1,\dots,i_{q_n}}$ are independent real random 
variables with moments of all orders and 
\[\E[J_{i_1,\dots,i_{q_n}}]=0,\;\;\;\; \E[J^2_{i_1,\dots,i_{q_n}}]=1.\]
Note that the factor $i^{\lfloor\sfrac{q_n}{2}\rfloor}$ (where $i$ is here $\sqrt{-1}$) is needed to make $H_{n,q_n}$ selfadjoint.

In the main theorem about expectations in Section \ref{section:3} we do not assume the variables $J_{i_1,\dots,i_{q_n}}$ to be 
identically distributed, but we do require uniformly bounded moments. For the result 
about fluctuations and higher correlations in Section \ref{section:4} we do require identical distribution and, in order to keep things simple, we also assume a Gaussian distribution. It will be important to 
distinguish the parity of $q_n$, see Theorem \ref{MainTheorem}.\par
We are interested in the distribution of products of independent copies of the 
SYK-model. For this purpose it is convenient to
have a compact notation for (\ref{SYK-model}). This motivates the following notation: for 
$1\leq q_n\leq \frac{n}{2}$ consider the set of tuples
\begin{equation}\nonumber
 I_n:=\{(i_1,\dots,i_{q_n})| 1\leq i_1 <\dots< i_{q_n}\leq n\},
\end{equation}
and for each $R=(i_1,\dots,i_{q_n})\in I_n$ denote $J_R:=J_{i_1,\dots,i_{q_n}}$ and 
consider the new variables
\begin{equation}\label{New_variables}
   \p_R:=\psi_{i_1}\cdots\psi_{i_{q_n}} i^{\lfloor\sfrac{q_n}{2}\rfloor}.
\end{equation}
Then for $1\leq q_n\leq \frac{n}{2}$ we rewrite the SYK-model as
\begin{equation*}
  H_{n,q_n}:=\frac{1}{|I_n|^{\sfrac{1}{2}}}\sum_{R\in I_n}J_R \p_R.
\end{equation*}
We collect some properties of the variables (\ref{New_variables}) in the following
lemma. See Section \ref{ProofLemmas} for the proof.
\begin{lemma}\label{Lemma0}
For every $R,Q\in I_n$ with $R\neq Q$ we have the identities
 \begin{equation}\label{psi_square}
  \p^2_R=I,
 \end{equation}
 and 
 \begin{equation}\label{psi_commutation}
 \p_Q\p_R=(-1)^{q_n+|Q\cap R|}\p_R\p_Q,
\end{equation}
where $|Q\cap R|$ stands for the number of common indices in $Q$ and $R$. 
\end{lemma}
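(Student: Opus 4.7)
The plan is to derive both identities directly from the Clifford relations (\ref{fermions}) by tracking the signs produced when one reorders fermions. The phase $i^{\lfloor q_n/2\rfloor}$ in the definition of $\p_R$ is chosen precisely to absorb the sign that arises from squaring an ordered product of $q_n$ distinct anticommuting fermions; thus (\ref{psi_square}) reduces to a small parity check, and (\ref{psi_commutation}) then follows from counting swap signs, with the only care needed being in the treatment of coincident indices.

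For (\ref{psi_square}), I would write
\[
\p_R^2 \;=\; i^{2\lfloor q_n/2\rfloor}\,(\psi_{i_1}\cdots\psi_{i_{q_n}})^2
\]
and evaluate the bare square by moving the leading $\psi_{i_1}$ of the second factor leftward past $\psi_{i_{q_n}},\dots,\psi_{i_2}$; each swap is between distinct indices, so by (\ref{fermions}) it contributes a factor $-1$, for a cumulative $(-1)^{q_n-1}$. After using $\psi_{i_1}^2=I$ one is reduced to $(\psi_{i_2}\cdots\psi_{i_{q_n}})^2$, and iterating collapses the bare square to $(-1)^{\binom{q_n}{2}}$. Combined with $i^{2\lfloor q_n/2\rfloor}=(-1)^{\lfloor q_n/2\rfloor}$, the identity reduces to verifying that $\binom{q_n}{2}+\lfloor q_n/2\rfloor$ is always even, which is immediate from a case split on the parity of $q_n$.

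For (\ref{psi_commutation}), the $i^{\lfloor q_n/2\rfloor}$ prefactors appear identically on both sides, so it suffices to compare the bare products $\psi_{j_1}\cdots\psi_{j_{q_n}}\,\psi_{k_1}\cdots\psi_{k_{q_n}}$ and $\psi_{k_1}\cdots\psi_{k_{q_n}}\,\psi_{j_1}\cdots\psi_{j_{q_n}}$, writing $Q=\{j_1,\dots,j_{q_n}\}$ and $R=\{k_1,\dots,k_{q_n}\}$. I would carry each $\psi_{k_m}$ leftward past every $\psi_{j_l}$ in turn. A single swap $\psi_{j_l}\psi_{k_m}=\pm\psi_{k_m}\psi_{j_l}$ produces sign $-1$ when $j_l\neq k_m$ (by (\ref{fermions})) and sign $+1$ when $j_l=k_m$, since there $\psi_{j_l}$ and $\psi_{k_m}$ are literally the same operator. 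Moving $\psi_{k_m}$ all the way to the left therefore contributes $(-1)^{q_n-[k_m\in Q]}$, and multiplying over $m\in[q_n]$ gives the total sign $(-1)^{q_n^2-|Q\cap R|}=(-1)^{q_n+|Q\cap R|}$, as required.

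The main place where a careless proof would go wrong is precisely the treatment of coincident indices in the last step: naively applying anticommutation also when $j_l=k_m$ would contribute a spurious $-1$ per coincidence and shift the final exponent by $|Q\cap R|$ in the wrong direction. Everything else is a routine application of (\ref{fermions}).
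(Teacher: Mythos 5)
Your proof is correct and follows essentially the same route as the paper's: the bare square is collapsed swap by swap to $(-1)^{q_n(q_n-1)/2}$ and cancelled against $i^{2\lfloor q_n/2\rfloor}$ via the same parity check, and the commutation sign is obtained by carrying each fermion of one factor past the other factor, with coincident indices contributing $+1$ instead of $-1$, yielding $(-1)^{q_n^2+|Q\cap R|}=(-1)^{q_n+|Q\cap R|}$ exactly as in the paper. No gaps.
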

\noindent
So, for two different multi-indices $Q$ and $R$, the variables $\p_Q$ and $\p_R$ commute 
or anti-commute depending on the parity of $q_n$ and on the size of the intersection of 
the multi-indices. The variables (\ref{New_variables}) also behave well with respect to 
the trace, see Lemma \ref{Lemma1}. \\
For square matrices $A\in\mathbb{M}_n(\CC)$ we will denote
$$\tr(A)=\frac{1}{n}\Tr(A),$$
where $\Tr$ is the usual non-normalized trace.

\subsection{$q$-Gaussian distribution and $q$-Brownian motion}

The $q$-Gaussian distribution, also known as $q$-semicircular distribution, was 
introduced in \cite{Mare-1991,BKS} in the context of non commutative probability. 
In this section we will review some basic definitions, for this purpose we will 
mainly follow \cite{Mare-1991}. In the following $q\in[-1,1]$ is fixed.
Consider a Hilbert space $\h$. On the algebraic full Fock space
\begin{equation}\nonumber
  \mathcal{F}_{alg}(\h)=\bigoplus_{n\geq 0}\h^{\otimes n},
\end{equation}
-- where $\h^0=\mathbb{C}\Omega$ with a norm one vector $\Omega$, called ``vacuum'' 
-- we define a $q$-deformed inner product as follows: 
 \begin{equation*}
   \langle h_1\otimes\cdots\otimes h_n,g_1\otimes\cdots\otimes g_m\rangle_q 
       = \delta_{nm}
           \sum_{\sigma\in S_n}
               \prod^n_{r=1}
                   \langle h_r,g_{\sigma(r)}\rangle q^{i(\sigma)},
 \end{equation*}
where 
$$i(\sigma)=\#\{(k,l)\mid 1\leq k<l\leq n; \sigma(k)>\sigma(l)\}$$
is the number of inversions of a permutation $\sigma\in S_n$.
In \cite{Mare-1991} it was shown that this inner product is positive definite, and has a kernel only for $q=1$ and $q=-1$.

 The $q$-Fock space is then defined as the completion of the algebraic full Fock space 
 with respect to this inner product
\begin{equation*}
 \mathcal{F}_q(\mathcal{H})
    =\overline{\bigoplus_{n\geq 0}\mathcal{H}^{\otimes n}}^{\langle\cdot,\cdot\rangle_q}.
\end{equation*}
In the cases $q=1$ and $q=-1$ we have to first divide out the kernel, thus leading to the symmetric and anti-symmetric Fock space, respectively.

Now for $h\in\mathcal{H}$ we define the $q$-creation operator $a^*(h)$, given by
 \begin{align*}
 a^*(h)\Omega&=h,\\
  a^*(h)h_1\otimes\cdots\otimes h_n&=h\otimes h_1\otimes\cdots\otimes h_n.
 \end{align*}
 Its adjoint (with respect to the $q$-inner product), the $q$-annihilation operator 
 $a(h)$, is given by
  \begin{align*}
     a(h)\Omega&=0,\\
     a(h)h_1\otimes\cdots\otimes h_n&=
        \sum_{r=1}^n 
           q^{r-1} 
             \langle h,h_r\rangle 
                h_1\otimes \cdots \otimes h_{r-1}\otimes h_{r+1}\otimes\cdots \otimes h_n.
 \end{align*}

Those operators satisfy the $q$-commutation relations
$$a(f)a^*(g)-q a^*(g)a(f)=\langle f,g\rangle \cdot 1\qquad (f,g\in \mathcal{H}).$$
For $q=1$, $q=0$, and $q=-1$ this reduces to the CCR-relations, the Cuntz relations, 
and the CAR-relations, respectively. With the exception of the case $q=1$, the operators 
$a^*(f)$ are bounded.

Operators of the form  
 \begin{equation*}
  s_q(h)=a(h)+a^*(h)
 \end{equation*}
for $h\in\mathcal{H}$ are called \emph{$q$-Gaussian} (or \emph{$q$-semicircular}) 
elements.

Finally, on $\mathcal{F}_q(\mathcal{H})$ we consider the vacuum expectation state
\begin{equation*}
 \tau(T)
   =\langle \Omega,T\Omega \rangle_q,
      \quad\mbox{ for }\quad 
         T\in\mathcal{B}(\mathcal{F}_q(\mathcal{H})).
\end{equation*}

The \emph{(multivariate) $q$-Gaussian distribution} is defined as the non commutative 
distribution of a collection of $q$-Gaussians with respect to the 
vacuum expectation state. As was shown in \cite{Mare-1991}, 
the joint distribution of $s_q(f_1),\dots,s_q(f_p)$ for $f_1,\dots,f_n\in\h$ is given by the following $q$-deformed version of the Wick/Isserlis formula: for any 
$\ee:\{1,\dots,k\}\rightarrow\{1,\dots,p\}$ we have 
$$\tau\left(s_q(f_{\ee(1)})\cdots s_q(f_{\ee(k)})\right)
             =\sum_{\pi\in\pP_2(k)}
                q^{cr(\pi)}\prod_{(r,s)\in\pi}
\langle f_{\ee(r)},f_{\ee(s)}\rangle.
$$
In the case of
orthonormal 
$h_1,\dots,h_p\in\mathcal{H}$ this reduces to
        \begin{equation}\label{eq:Wick-ortho}
         \tau\left(s_q(h_{\ee(1)})\cdots s_q(h_{\ee(k)})\right)
             =\sum_{\stackrel{\pi\in\pP_2(k)}{\pi\leq \ker\ee}}
                q^{cr(\pi)}.
        \end{equation}
Of course, in the case $q=0$, $0^0$ has to be understood as 1, i.e., in this case the factor $q^{cr(\pi)}$ is suppressing all crossing pairings and the sum is effectively just running over non-crossing pair-partitions.

For $p=1$, the $q$-Gaussian distribution is a probability measure on the interval 
$[-2/\sqrt{1-q}, 2/\sqrt{1-q}]$, with analytic formulas for its density, see Theorem 
1.10 in \cite{BKS}. For the special cases $q=1$, $q=0$, and $q=-1$, this reduces to
the classical Gaussian distribution, the semicircular distribution, and the symmetric 
Bernoulli distribution on $\pm 1$, respectively.

The $q$-Brownian motion is a special process version of the $q$-Gaussian distribution. Namely, if we take as our underlying Hilbert space $\mathcal{H}=L^2(\mathbb{R}_0^+)$ and as indexing vectors the family $1_{[0,t]}$ ($t\geq 0$) of characteristic functions of intervals $[0,t]$, then the process $((S_q(t))_{t\geq 0}$ with 
\begin{equation}\label{q-BM-eq}
S_q(t)=s_q(1_{[0,t]})=a(1_{[0,t]})+a^*(1_{[0,t]})
\end{equation}
is called \emph{$q$-Brownian motion}. In the case $q=1$ it is indeed classical Brownian motion (in the sense that it has the same expectation values as classical Brownian motion), and in the case $q=0$ it is free Brownian motion.

\section{Expectations for the dynamical model}\label{section:3}

In this section we present a multi-variable as well as a dynamical version of a result from  
\cite{Feng:2018zsx} and \cite{Erd-2014}.
\begin{theorem}\label{MainTheorem}
Consider $p$ independent and identically distributed copies\\ $H_{1},\dots, H_p$ 
 of the SYK model $H_{n,q_n}$, with uniformly bounded random coefficients 
 (\ref{SYK-model}). We assume the existence of the limit
 $$\frac{q^2_n}{n}
     \rightarrow
         \lambda\in[0,\infty],
             \quad\mbox{as $n\rightarrow\infty$,}$$ 
and describe this in terms of a number $q\in [-1,1]$ in the following form:
\begin{itemize}
   \item[i)] If $(q_n)_{n\geq1}$ is a sequence of even positive integers, then 
             $q=e^{-2\lambda}$.
   \item[ii)] If $(q_n)_{n\geq1}$ is a sequence of odd positive integers, then 
             $q=-e^{-2\lambda}$.
\end{itemize}
Then $(H_{1},\dots, H_p)$ converges in distribution to a tuple of $q$-Gaussian 
variables $(s_q(h_1),\dots,s_q(h_p))$ for an orthonormal system $h_1,\dots,h_p$. 
Concretely, this means that for every positive integer $k$ and for every 
$\varepsilon:[k]\rightarrow [p]$, we have that
 \begin{equation}\label{limit}
   \lim_{n\rightarrow\infty}
      \E\left[
         \tr(H_{\varepsilon(1)}\cdots H_{\varepsilon(k)})\right] 
            =\sum_{\stackrel{\pi\in \pP_2(k)}{\pi\leq\Ker \varepsilon}} 
                q^{cr(\pi)}
                    =\tau\left(s_q(h_{\ee(1)})\cdots s_q(h_{\ee(k)})\right).
 \end{equation}
Note that all three expression in (\ref{limit}) are zero when $k$ is odd.
\end{theorem}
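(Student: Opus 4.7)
The strategy will be to expand each $H_r$ directly and reduce the expectation to a sum over set partitions of $[k]$. Writing the random couplings of the $r$-th copy as $J^{(r)}_R$ (independent across $r\in[p]$ and across distinct $R\in I_n$ within each copy), one obtains
\begin{equation*}
 \E\left[\tr(H_{\ee(1)}\cdots H_{\ee(k)})\right]
   = \frac{1}{|I_n|^{k/2}} \sum_{R_1,\dots,R_k\in I_n}
     \E\!\left[J^{(\ee(1))}_{R_1}\cdots J^{(\ee(k))}_{R_k}\right]
     \tr(\p_{R_1}\cdots\p_{R_k}).
\end{equation*}
Independence factorizes the joint expectation along the partition $\pi\in\pP(k)$ recording the coincidence pattern of the pairs $(R_i,\ee(i))$, and since the couplings have zero mean only partitions whose blocks all have size at least two survive. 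The requirement that $\ee$ be constant on every block of $\pi$ is precisely the condition $\pi\leq\Ker\ee$.

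A standard power-counting argument isolates the leading contribution. For a fixed $\pi$ with blocks of size at least two the number of admissible tuples $(R_1,\dots,R_k)$ is of order $|I_n|^{|\pi|}$, so combined with the prefactor $|I_n|^{-k/2}$, the uniformly bounded moments of the $J$'s, and the bound $|\tr(\p_{R_1}\cdots\p_{R_k})|\leq 1$ (each $\p_R$ being unitary), the partition contributes $O(|I_n|^{|\pi|-k/2})$. Since $|\pi|\leq k/2$ with equality exactly for pair partitions, only $\pi\in\pP_2(k)$ with $\pi\leq\Ker\ee$ survive in the limit; in particular the odd-$k$ case is automatic as $\pP_2(k)=\emptyset$. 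For each such surviving $\pi$ one may restrict to injective $R:\pi\to I_n$ up to a negligible $O(|I_n|^{-1})$ correction, and the remaining moment equals $\prod_{B\in\pi}\E[(J^{(\ee(B))}_{R(B)})^2]=1$.

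Lemma \ref{Lemma0} then allows explicit evaluation of the trace. Transport, for each block $B=\{i<j\}$, the factor $\p_{R_j}$ leftward until it meets $\p_{R_i}$; by \eqref{psi_commutation} each block $B'$ whose two elements separate those of $B$ contributes a sign $(-1)^{q_n+|R(B)\cap R(B')|}$, while blocks fully nested inside $B$ contribute trivial squared signs, and \eqref{psi_square} then collapses the paired $\p_{R(B)}$'s to the identity. Iterating from innermost to outermost blocks yields
\begin{equation*}
 \tr(\p_{R_1}\cdots\p_{R_k})
   = \prod_{\{B,B'\}\text{ crossing in }\pi}
      (-1)^{q_n+|R(B)\cap R(B')|},
\end{equation*}
so the contribution of $\pi$ reduces, up to $o(1)$, to the expectation of this product of signs over independent uniform $R(B)\in I_n$.

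The core of the proof is then the probabilistic evaluation of that expectation, which will be the main obstacle. For two independent uniform $q_n$-subsets $R,R'$ of $[n]$ the intersection $|R\cap R'|$ is hypergeometric with mean $q_n^2/n\to\lambda$, hence converges in distribution to a Poisson$(\lambda)$ random variable; evaluating its probability generating function at $-1$ gives
\begin{equation*}
 \E\bigl[(-1)^{q_n+|R\cap R'|}\bigr]\longrightarrow (-1)^{q_n}e^{-2\lambda},
\end{equation*}
which equals $q$ in both parity cases i) and ii). The remaining delicate step is to show that the random variables $|R(B)\cap R(B')|$ indexed by the distinct crossing pairs of $\pi$ decorrelate asymptotically into independent Poissons, so that the expectation of the product of signs factorizes and produces $q^{cr(\pi)}$. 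This joint Poissonization is the quantitative heart of the argument and follows by adapting the single-copy moment computations of \cite{Feng:2018zsx,Erd-2014}; the only genuinely new ingredient in the multivariate setting is the constraint $\pi\leq\Ker\ee$ enforced by independence across the $p$ copies, which is exactly what produces the Wick-type formula \eqref{eq:Wick-ortho} for an orthonormal system $h_1,\dots,h_p$.
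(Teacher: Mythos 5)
Your proposal is correct and follows essentially the same route as the paper: the same expansion and power-counting reduction to pair partitions $\pi\leq\Ker\varepsilon$, the same evaluation of $\tr(\p_{R_1}\cdots\p_{R_k})$ via the (anti)commutation relations as a product of signs over crossing pairs, and the same hypergeometric-to-Poisson limit for the intersection sizes, with the joint decorrelation of the crossing-pair intersections deferred (exactly as the paper does via its Lemma \ref{Lemma2}) to \cite{Erd-2014,Feng:2018zsx}. The one point to watch is the case $\lambda=\infty$ (i.e.\ $q=0$), where Poisson convergence does not literally apply and one needs the separate estimate $\E\bigl[(-1)^{|R\cap R'|}\bigr]\to 0$; the paper explicitly flags this as the more delicate case.
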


\begin{corollary}\label{cor:process}
Consider the following dynamical version of the SYK model:
\begin{equation}\label{SYK-model-process}
  H(t):=
    \frac{i^{\lfloor\sfrac{q_n}{2}\rfloor}}{\binom{n}{q_n}^{\sfrac{1}{2}}}
       \sum_{1\leq i_1<\cdots<i_{q_n}\leq n}
           J_{i_1,\dots,i_{q_n}} (t)
              \psi_{i_1}\cdots\psi_{i_{q_n}},
\end{equation}
where the $J_{i_1,\dots,i_{q_n}} (t)$ (with $n\in\mathbb{N}$, $1\leq i_1<\cdots<i_{q_n}\leq n$) are independent classical Brownian motions, and the $q_n$ and $q$ are as in Theorem \ref{MainTheorem}. Then, the process $(H(t))_{t\geq 0}$ converges, for $n\to\infty$, to the $q$-Brownian motion $(S_q(t))_{t\geq 0}$ as given in \eqref{q-BM-eq}, in the sense that we have for all $0\leq t_1,\dots,t_k$ that
\begin{equation}\label{limit-process}
\lim_{n\rightarrow\infty}
      \E\left[
         \tr(H(t_1)\cdots H(t_k))\right] 
                    =\tau\left(S_q(t_1)\cdots S_q(t_k)\right).
\end{equation}
\end{corollary}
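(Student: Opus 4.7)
The plan is to reduce the process-level statement directly to Theorem \ref{MainTheorem} by decomposing $H(t)$ into the independent Brownian increments that drive it. Fix $t_1,\dots,t_k\geq 0$, list the distinct times as $0=s_0<s_1<\cdots<s_p$, and pick $j(r)\in\{1,\dots,p\}$ with $t_r=s_{j(r)}$. Put $\Delta H_j:=H(s_j)-H(s_{j-1})$. Since Brownian motion has independent increments, the coefficient families $\{J_R(s_j)-J_R(s_{j-1})\}_{R\in I_n}$ for distinct $j$ are mutually independent, and within each family the coefficients are i.i.d.\ centered Gaussians of variance $s_j-s_{j-1}$. Hence the rescaled matrices $\tilde H_j:=(s_j-s_{j-1})^{-1/2}\Delta H_j$ form $p$ independent copies of the SYK model with unit-variance Gaussian couplings, so Theorem \ref{MainTheorem} applies verbatim to $(\tilde H_1,\dots,\tilde H_p)$.

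Invoking that theorem yields
$$(\tilde H_1,\dots,\tilde H_p)\;\longrightarrow\;(s_q(h_1),\dots,s_q(h_p))$$
in $\E\circ\tr$-moments, for an orthonormal system $h_1,\dots,h_p\in\h$. Rescaling by $\sqrt{s_j-s_{j-1}}$ and using linearity of $s_q(\cdot)$ in its argument gives convergence of $(\Delta H_1,\dots,\Delta H_p)$ to $(s_q(f_1),\dots,s_q(f_p))$, where $f_j:=\sqrt{s_j-s_{j-1}}\,h_j$ are orthogonal with $\|f_j\|^2=s_j-s_{j-1}$. On the $q$-Fock side, the $q$-Brownian increments $S_q(s_j)-S_q(s_{j-1})=s_q(1_{[s_{j-1},s_j]})$ are built from indicator functions in $L^2(\R_0^+)$ which form a second orthogonal family with exactly the same squared norms. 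Because the general $q$-Wick formula stated just before \eqref{eq:Wick-ortho} depends on the indexing vectors only through their Gram matrix, the joint moments of $(s_q(f_1),\dots,s_q(f_p))$ and of $(S_q(s_1)-S_q(s_0),\dots,S_q(s_p)-S_q(s_{p-1}))$ coincide.

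To conclude, write $H(t_r)=\sum_{j\leq j(r)}\Delta H_j$ and $S_q(t_r)=\sum_{j\leq j(r)}(S_q(s_j)-S_q(s_{j-1}))$, and substitute these into $\tr(H(t_1)\cdots H(t_k))$ and $\tau(S_q(t_1)\cdots S_q(t_k))$. Both expressions then become the same fixed multilinear functional of the corresponding joint moments of the increment tuples, and the previous paragraph delivers \eqref{limit-process}. The only non-cosmetic point is the identification of the abstract orthonormal system of Theorem \ref{MainTheorem} with the canonical orthogonal family $\{1_{[s_{j-1},s_j]}\}\subset L^2(\R_0^+)$; this matching is forced by variance bookkeeping and rendered transparent by the $q$-Wick formula, so beyond Theorem \ref{MainTheorem} there is no genuine analytic obstacle in the argument.
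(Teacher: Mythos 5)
Your proposal is correct and follows essentially the same route as the paper, which disposes of the corollary in one sentence by decomposing the $H(t_i)$ and $S_q(t_i)$ into orthogonal increments and expanding multilinearly; you have simply written out the details (independence and Gaussianity of the rescaled increments so that Theorem \ref{MainTheorem} applies, and the Gram-matrix argument identifying the limiting increments with the $q$-Brownian increments). No gaps.
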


Note that Corollary \ref{cor:process} follows from Theorem \ref{MainTheorem} by writing all the appearing $H(t_1),\dots, H(t_k)$ as well as the corresponding $S_q(t_1),\dots, S_q(t_k)$ as sums for orthogonal increments and then expand everything in a multilinear way.

Thus it suffices to prove Theorem \ref{MainTheorem}. For the proof of this we will rely on the following two lemmas. The proof of those will be postponed to 
Section \ref{ProofLemmas}.

\begin{lemma}\label{Lemma1}
For every $\alpha:[k]\rightarrow I_n$ we have the following
\begin{itemize}
 \item[i)] If every block in $\Ker\alpha$ has even size, then 
            $\p_{\alpha(1)}\cdots\p_{\alpha(k)}=\pm I$, where I is the identity matrix.
 \item[ii)] For $\pi\in\pP_2(k)$ with $\Ker\alpha\geq\pi$ we have the identity
             \begin{equation}\label{eq:comm-pi}
                \tr(\p_{\alpha})
                     =(-1)^{q_n cr(\pi)
                         +\sum|\alpha(V)\cap\alpha(W)|},
             \end{equation}
             where the sum is taken over all pairs $\{V,W\}$ of crossing blocks in $\pi$.
             We are using here notation (\ref{N1}) and (\ref{N3}). Also, for $Q,R\in I_n$
             we denote by $Q\cap R$, the set of indices that $Q$ and $R$ have in common.
\end{itemize}
\end{lemma}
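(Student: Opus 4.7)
The strategy is to use the commutation relation (\ref{psi_commutation}) from Lemma \ref{Lemma0} to rearrange $\p_{\alpha(1)}\cdots\p_{\alpha(k)}$ so that adjacent factors with equal $\alpha$-values appear, then collapse them via $\p_R^2 = I$, keeping careful track of the signs. For part (i), I would induct on $k$, with the empty product giving $I$ as base case. Since every block of $\Ker\alpha$ has even size, I can pick two positions $a < b$ with $\alpha(a) = \alpha(b)$. Applying (\ref{psi_commutation}) to move $\p_{\alpha(b)}$ leftward past each intervening $\p_{\alpha(j)}$ for $a < j < b$ produces a factor of $\pm 1$ per swap, after which $\p_{\alpha(a)}\p_{\alpha(b)} = \p_{\alpha(a)}^2 = I$ collapses. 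The remaining product over positions $[k]\setminus\{a,b\}$ still has every $\Ker$-block of even size (the one containing $a$ and $b$ loses exactly two elements), so the inductive hypothesis yields $\pm I$.

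For part (ii), I would also induct on $k$ and take $V = \{v_1,k\}$ to be the block of $\pi$ whose larger element is $k$. The idea is to move $\p_{\alpha(k)}$ leftward past each $\p_{\alpha(j)}$ for $v_1 < j < k$, collecting a factor $(-1)^{q_n + |\alpha(k)\cap\alpha(j)|}$ per swap from (\ref{psi_commutation}); this formula extends trivially to the case $\alpha(j) = \alpha(k)$ by reading $|Q\cap Q| = q_n$ and $(-1)^{2q_n} = +1$. After these swaps, $\p_{\alpha(v_1)}\p_{\alpha(k)} = I$ collapses, leaving a product over $[k]\setminus\{v_1,k\}$ with pair partition $\pi\setminus\{V\}$, which satisfies the induction hypothesis.

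The key combinatorial step is a dichotomy for each interior position $j \in (v_1,k)$, whose $\pi$-block $W_j$ must satisfy either (a) $W_j \subset (v_1,k)$, i.e.\ $W_j$ is nested in $V$, or (b) the other element of $W_j$ lies below $v_1$, i.e.\ $W_j$ crosses $V$ (the cases $j' = v_1$, $j' = k$, $j' > k$ being ruled out). In case (a), the two elements of $W_j$ each contribute a swap sign, and since $\alpha$ is constant on $W_j$ these two signs are equal, so their product is $+1$. In case (b), $W_j$ contributes a single swap with sign $(-1)^{q_n + |\alpha(V)\cap\alpha(W_j)|}$. Multiplying these contributions and combining with the inductive sign for $\pi\setminus\{V\}$ (whose crossings are exactly the crossings of $\pi$ not involving $V$) yields exactly $(-1)^{q_n\, cr(\pi) + \sum |\alpha(V)\cap\alpha(W)|}$. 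Since the end result is $\pm I$, its normalized trace is this same sign, which proves (\ref{eq:comm-pi}). The main obstacle is the verification of this nested-versus-crossing dichotomy and the cancellation of the nested-block signs, but once set up it is clean and essentially bookkeeping.
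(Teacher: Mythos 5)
Your proof is correct and follows essentially the same route as the paper: both use the commutation relation \eqref{psi_commutation} of Lemma \ref{Lemma0} together with $\p_R^2=I$ to bring paired factors together and collapse them, picking up one sign $(-1)^{q_n+|\alpha(V)\cap\alpha(W)|}$ per crossing pair of blocks. Your inductive organization (peeling off the block containing $k$ and checking the nested-versus-crossing dichotomy, with nested blocks contributing two equal and hence cancelling signs) is just a more explicit write-up of the reduction the paper states in a few lines.
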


Note that, contrary to first impression, the term \eqref{eq:comm-pi} does not depend on $\pi$, even in the case where $\ker\alpha$ is not a pairing itself; for this note that, for example, the contribution for a crossing of two blocks with the same value of $\alpha(V)=\alpha(W)$ is given by
$$(-1)^{q_n\cdot 1+q_n}=(-1)^{2q_n}=1.$$

\begin{lemma}\label{Lemma2}
Let $\pi\in\pP_2(k)$ be a pairing and consider the following sum
\begin{equation}\label{eq:sum-hyper}
S(\pi,n):=\frac{1}{|I_n|^{\sfrac{k}{2}}}
         \sum_{\stackrel{\alpha:[k]\rightarrow I_n}{\Ker\alpha\geq\pi}}
            \tr(\p_\alpha)
\end{equation}
\begin{itemize}
\item[i)]
Then we have the following limit
 \begin{equation}\label{eq:sum-hyper-lim}
\lim_{n\to\infty} S(\pi,n)=q^{cr(\pi)}
 \end{equation}
\item[ii)]
Let $V\in\pi$ be a block of $\pi$. If we fix in \eqref{eq:sum-hyper} the value of $\alpha(V)$ corresponding to this block $V$ and sum only over the remaining $k-2$ $\alpha(i)$'s, then the result of this restricted sum 
\begin{equation}\label{eq:sum-hyper-restr}
\frac{1}{|I_n|^{\sfrac{(k-2)}{2}}}
         \sum_{\substack{\alpha:[k]\rightarrow I_n \\ \Ker\alpha\geq\pi \\
\alpha(V)=R}}
            \tr(\p_\alpha)
 \end{equation}
is the same as in \eqref{eq:sum-hyper}, independent of the chosen block $V$ and of the fixed value $R\in I_n$ for $\alpha(V)$.
\end{itemize}
\end{lemma}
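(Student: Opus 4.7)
The plan is to establish part (ii) first as a pure symmetry statement, and then to prove part (i) by substituting Lemma \ref{Lemma1}(ii) into $S(\pi,n)$ and recognizing the resulting sum as the pre-limit of a multivariate Poisson expectation that contributes one factor $e^{-2\lambda}$ per crossing of $\pi$.

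For (ii), I would exploit the natural action of $S_n$ on $I_n$ by permutation of the index set $[n]$, which is transitive. Given $\sigma\in S_n$, the composition $\alpha\mapsto\sigma\circ\alpha$ is a bijection of $\{\alpha:[k]\to I_n\mid\Ker\alpha\geq\pi\}$, and Lemma \ref{Lemma1}(ii) shows that $\tr(\p_\alpha)$ depends on $\alpha$ only through the intersection sizes $|\alpha(V')\cap\alpha(W')|$ for crossing pairs in $\pi$; since $\sigma$ is a bijection on $[n]$, these sizes are preserved, so $\tr(\p_{\sigma\circ\alpha})=\tr(\p_\alpha)$. By transitivity, the restricted sum \eqref{eq:sum-hyper-restr} is independent of the fixed value $R$. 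Averaging \eqref{eq:sum-hyper-restr} over all $R\in I_n$ then reassembles \eqref{eq:sum-hyper}, so the restricted sum is forced to equal $S(\pi,n)$; since $V$ played no distinguished role, the value also does not depend on the chosen block.

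For (i), I would apply Lemma \ref{Lemma1}(ii) and use the bijection $\alpha\leftrightarrow(R_V)_{V\in\pi}$ for $\alpha$ with $\Ker\alpha\geq\pi$ (with $R_V:=\alpha(V)$) to rewrite
$$S(\pi,n)=(-1)^{q_n cr(\pi)}\cdot\frac{1}{|I_n|^{k/2}}\sum_{(R_V)\in I_n^\pi}\prod_{\{V,W\}\text{ cr in }\pi}(-1)^{|R_V\cap R_W|}.$$
The normalized sum is the joint expectation under the uniform product measure on $I_n^\pi$. For a single independent uniform pair $(R,R')$, the intersection $|R\cap R'|$ is hypergeometric with mean $q_n^2/n\to\lambda$, and $\E[(-1)^{|R\cap R'|}]\to e^{-2\lambda}$ by the Poisson limit. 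In the sparse regime $q_n/n\to 0$, higher-order intersections satisfy $\E[|R_V\cap R_W\cap R_U|]=O(q_n^3/n^2)\to 0$, which yields joint asymptotic independence of the pairwise counts $(|R_V\cap R_W|)_{\{V,W\}\text{ cr}}$ toward a family of independent $\mathrm{Poisson}(\lambda)$ variables. The joint expectation therefore factorizes in the limit to $e^{-2\lambda\cdot cr(\pi)}$. Combined with the prefactor $(-1)^{q_n cr(\pi)}$, which is $+1$ for even $q_n$ and $(-1)^{cr(\pi)}$ for odd $q_n$, this yields exactly $q^{cr(\pi)}$ under both sign conventions of Theorem \ref{MainTheorem}.

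The main obstacle is making the joint Poisson factorization rigorous, since the fixed-size constraint $|R_V|=q_n$ couples the indicators $\mathbf{1}_{i\in R_V}$ across $i\in[n]$ and spoils exact independence. I plan to handle this by writing $(-1)^{|R_V\cap R_W|}=\prod_{i=1}^n(-1)^{\mathbf{1}_{i\in R_V}\mathbf{1}_{i\in R_W}}$ and temporarily replacing each uniform $R_V$ by a Bernoulli$(q_n/n)$ subset, under which the factors across $i$ become truly independent; an expansion in $p=q_n/n$ gives $(1-2cr(\pi)p^2+O(p^3))^n\to e^{-2\lambda cr(\pi)}$, and the Poissonization correction from conditioning back on $|R_V|=q_n$ can be shown to be $o(1)$ in the relevant regime. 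An alternative purely combinatorial route expands the product directly in the indicators and keeps only the ``single double-hit per crossing'' contributions, with the rest controlled by elementary counting. The boundary cases $\lambda=0$ (trivial, since each intersection is typically $0$) and $\lambda=\infty$ (where $q=0$ and each factor vanishes) are both covered by the same expansion.
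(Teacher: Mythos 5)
Your part (ii) is essentially the paper's own argument: the paper likewise picks a bijection $\tau$ of $[n]$ carrying one admissible value $R_1$ to another $R_2$, extends it to $I_n$, observes via Lemma \ref{Lemma1}(ii) that $\tr(\p_\alpha)$ depends only on the intersection sizes $|\alpha(V)\cap\alpha(W)|$, which $\tau$ preserves, and re-indexes the restricted sum. Your additional averaging step, identifying the common value of \eqref{eq:sum-hyper-restr} with $S(\pi,n)$ and hence removing the dependence on the block $V$, is correct.

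For part (i) the paper gives no argument at all --- it simply cites \cite{Erd-2014,Feng:2018zsx} --- so you are attempting more than the paper records. Your reduction of $S(\pi,n)$ to $(-1)^{q_n cr(\pi)}$ times the expectation of $(-1)^{\sum|R_V\cap R_W|}$ (sum over crossing pairs) for independent uniform $R_V\in I_n$, with each $|R_V\cap R_W|$ hypergeometric of mean $q_n^2/n\to\lambda$, is indeed the mechanism behind the cited proofs. Two steps are glossed over, however. First, the factorization of the joint expectation into a product over crossings is justified only by a heuristic about triple intersections; the actual issue is the coupling of $|R_V\cap R_W|$ and $|R_W\cap R_Z|$ through the shared block $W$. (A cleaner route, used in an earlier draft of this paper, is to condition on $R_W$: the conditional laws are hypergeometric and do not depend on the conditioning value, giving exact rather than asymptotic independence of such a pair.) Second, and more seriously, your treatment of $\lambda=\infty$ does not work as stated: the hypotheses only give $q_n\le n/2$, so when $q_n^2/n\to\infty$ the ratio $p=q_n/n$ need not tend to $0$, your expansion $(1-2\,cr(\pi)p^2+O(p^3))^n$ is then invalid, and the de-Poissonization error is not obviously $o(1)$. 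This is exactly the case the paper singles out as ``more involved'' (the $q=0$ case, where intersections are typically large), and it is where the citation to \cite{Feng:2018zsx} is doing real work. So your (i) is a reasonable plan for $\lambda<\infty$ modulo the independence point, but it does not cover the $q=0$ regime.
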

\noindent

\begin{proof}[Proof of Theorem \ref{MainTheorem}]
Consider the following expansion for the left side of (\ref{limit})
 \begin{equation}\label{ExpansionE}
  \E\left[\tr\left(H_\varepsilon\right)\right]
      =\frac{1}{|I_n|^{\sfrac{k}{2}}}
         \sum_{\alpha:[k]\rightarrow I_n}
             \E\left[J^\varepsilon_\alpha\right]
                 \tr\left(\p_\alpha\right).
 \end{equation} 
The variables $\p_R$, for $R\in I_n$, were introduced in (\ref{New_variables}). We are
also using the notation for products of non-commutative variables as introduced in 
(\ref{N1}) and (\ref{N2}).
Thus $H_\ee=H_{\ee(1)}\cdots H_{\ee(k)}$, $J_\alpha^\ee=J_{\alpha(1)}^{\ee(1)}\cdots
J_{\alpha(k)}^{\ee(k)}$, and
$\p_\alpha=\p_{\alpha(1)}\cdots \p_{\alpha(k)}$; note that each $\p_{\alpha(i)}$ is actually a $\p_R$ for $R\in I_n$, i.e., a product of $q_n$-many $\psi$'s.

We can split the sum in (\ref{ExpansionE}) as
\begin{equation*}
\sum_{\alpha:[k]\rightarrow I_n}
   =\sum_{\substack{\alpha:[k]\rightarrow I_n \\ |\Ker\alpha|<k/2}}
        +\sum_{\substack{\alpha:[k]\rightarrow I_n \\ |\Ker\alpha|=k/2}}
            +\sum_{\substack{\alpha:[k]\rightarrow I_n \\ |\Ker\alpha|>k/2}}.
\end{equation*}

The last term does not contribute, because if
$|\Ker\alpha|>k/2$ then $\Ker\alpha$ has a block of size one, and then
$\E\left[J^\varepsilon_\alpha\right]=0$.

By noticing that always $\vert \tr\left(\p_\alpha\right)\vert \leq 1$,
we get for the case $|\Ker\alpha|<k/2$ the bound
\begin{equation}\label{O_bound_E}
 \Bigl\vert \frac{1}{|I_n|^{\sfrac{k}{2}}} 
     \sum_{\substack{\alpha:[k]\rightarrow I_n \\ |\Ker\alpha|<k/2}}
         \E\left[J^\varepsilon_\alpha\right]
             \tr\left(\p_\alpha\right)\Bigr\vert 
                  \leq c_{k} d_k
                      \frac{|I_n|^{\sfrac{k}{2}-1}}{|I_n|^{\sfrac{k}{2}}}
                          =\frac{c_{k}d_k}{|I_n|}.
\end{equation}
The constant $c_k$ comes from the uniform bound condition on the random coefficients in
(\ref{SYK-model}) and $d_k$ is the number of possible $\pi:=\ker\alpha\in\pP(k)$ with $\vert \pi\vert= k/2-1$. The estimate \eqref{O_bound_E} shows that this term does also vanish in the limit $n\to\infty$.

So we are left with the sum over $|\Ker\alpha|=k/2$. 
Since $|\Ker\alpha|=k/2$ we can assume $\Ker\alpha\in\pP_2(k)$, otherwise 
$\Ker\alpha$ has a block of size one, then $\E\left[J^\varepsilon_\alpha\right]=0$.
Also the condition
$\Ker\alpha\in\pP_2(k)$ implies
\[\E\left[J^\varepsilon_\alpha\right]=\left\{\begin{array}{ccc}
                                     1 & \mbox{ if } & \Ker\alpha\leq\Ker\varepsilon,\\
                                     0 & &\mbox{otherwise.} 
                                    \end{array}\right.
\]
This together with Lemma \ref{Lemma2} yields
\begin{align}\label{eq:new-20}
\lim_{n\to\infty} \frac{1}{|I_n|^{\sfrac{k}{2}}}
   \sum_{\pi\in\pP_2(k)}
     \sum_{\stackrel{\alpha:[k]\rightarrow I_n}{\Ker\alpha=\pi}}
       \E\left[J^\varepsilon_\alpha\right]\tr\left(\p_\alpha\right)& \nonumber\\
 =\sum_{\stackrel{\pi\in\pP_2(k)}{\pi\leq\ker\epsilon}}
     q^{cr(\pi)}&
        -\lim_{n\to\infty}\frac{1}{|I_n|^{\sfrac{k}{2}}}
              \sum_{\stackrel{\pi\in\pP_2(k)}{\pi\leq\ker\epsilon}}
                   \sum_{\stackrel{\alpha:[k]\rightarrow I_n}{\Ker\alpha>\pi}}
                         \tr\left(\p_\alpha\right).
\end{align}
For each fixed $\pi\in\pP_2(k)$ with $\pi\leq\ker\epsilon$ we can bound, similar
as in \eqref{O_bound_E}, the corresponding correction term:
\begin{equation}\nonumber
 \Bigl\vert\frac{1}{|I_n|^{\sfrac{k}{2}}}
        \sum_{\stackrel{\alpha:[k]\rightarrow I_n}{\Ker\alpha>\pi}}
             \tr\left(\p_\alpha\right)\Bigr\vert
                 \leq \binom{k/2}2
                       \frac{|I_n|^{k/2-1}}{|I_n|^{k/2}} \to 0\qquad \text{for $n \to\infty$}.
\end{equation}
The binomial factor comes here from the number of possibilities of joining two blocks of $\pi$.

Hence the right hand side of \eqref{eq:new-20} reduces to the first term, which is, by \eqref{eq:Wick-ortho}, the corresponding moment for a $q$-Gaussian family. 
This concludes the proof of Theorem \ref{MainTheorem}.
\end{proof}

\section{Fluctuations and higher order correlations for the multivariate model}\label{section:4}
The classical cumulants are a family $(c_m)_{m\in\mathbb{N}}$ of multilinear  functionals, given by
\begin{equation}\label{Cumulant}
 c_m(a_1,\dots,a_m)
     =\sum_{\sigma\in\pP(m)}
          \E_\sigma\left[a_1,\dots,a_m\right]
              \mu(\sigma,1_m),
\end{equation}
where $\E_\sigma$ stands for
$$\E_\sigma\left[a_1,\dots,a_m\right]
     =\prod_{\substack{B\in\sigma\\B=\{i_1,\dots,i_r\}}}
         \E\left[a_{i_1}\cdots a_{i_r}\right],$$
and $\mu(\sigma,1_m)=(-1)^{|\sigma|-1}(|\sigma|-1)!$ is the M\"obius function.
This family of functionals characterizes tensor independence. See for example 
\cite{book:833269} for more details on this.

In this section we will identify the convergence of 
$c_m(H_{\varepsilon_1},\dots,H_{\varepsilon_m})$,
in a similar way as in Theorem \ref{MainTheorem}. 
Theorem \ref{FluctuationsThm} is an extension of a result
that originally appeared in \cite{Feng:2018tsw}. We will restrict here to the multivariate version for independent copies. The extension of this to the dynamical version follows as before easily via multilinear extension; in order to keep the notation as simple as possible we refrain from giving this dynamical version explicitly.
\begin{theorem}\label{FluctuationsThm}
Let $(H_k)_{k\in\NN}$ be independent copies of the SYK model $H_{n,q_n}$ from Equation \ref{SYK-model},
with centered Gaussian random 
coefficients.
For positive integers $m,k_1,\dots,k_m$ set $k:=k_1+\cdots+k_m$ and denote
$\theta=\{T_1,\dots,T_m\}$, where
$T_1=[1,k_1], T_2=[1+k_1,k_1+k_2],\dots, T_m=[1+k_1+\cdots+k_{m-1},k]$.
Given a function $\varepsilon:[1,k]\rightarrow \NN$, let us denote for each 
$1\leq i\leq m$ the functions $\varepsilon_i:=\varepsilon|_{T_i}$.
Under the same assumptions on $n$ and $q_n$ as in Theorem \ref{MainTheorem}, we have
\begin{equation}\label{Cm-SYK}
\vert I_n\vert^{m-1} \cdot{c_m\left(\tr(H_{\varepsilon_1}),\dots,\tr(H_{\varepsilon_m})\right)}
        \xrightarrow{n\rightarrow\infty}
\sum_{\substack{\pi,\pi'\in\pP_2(k)\\ \pi\vee \theta =1_k, \pi'\leq\theta\\ \pi\leq\ker\ee\\
\vert \pi\vee\pi'\vert=k/2-m+1}}
  q^{cr(\pi')},
\end{equation}
where the parameter $q$ is determined in the same way as in Theorem \ref{MainTheorem}.
\end{theorem}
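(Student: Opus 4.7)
The plan is to combine a Leonov--Shiryaev expansion of the classical cumulant with the same trace/partition analysis that underlies Theorem \ref{MainTheorem}. First I would use multilinearity and expand each $\tr(H_{\ee_i})$ as in \eqref{ExpansionE}, pulling the (deterministic) factors $\tr(\p_{\alpha_i})$ outside to obtain
\begin{equation*}
c_m\bigl(\tr(H_{\ee_1}),\dots,\tr(H_{\ee_m})\bigr)
=\frac{1}{|I_n|^{k/2}}\sum_{\alpha:[k]\to I_n}
c_m\bigl(J^{\ee_1}_{\alpha_1},\dots,J^{\ee_m}_{\alpha_m}\bigr)\prod_{i=1}^{m}\tr(\p_{\alpha_i}).
\end{equation*}
Because the $J$'s are independent centered Gaussians, Leonov--Shiryaev collapses the cumulant of the products to the number of pair partitions $\pi\in\pP_2(k)$ with $\pi\vee\theta=1_k$, $\pi\leq\Ker\alpha$, and $\pi\leq\Ker\ee$. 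Swapping the order of summation gives
\begin{equation*}
|I_n|^{m-1}\,c_m\bigl(\tr(H_{\ee_1}),\dots,\tr(H_{\ee_m})\bigr)
=\sum_{\substack{\pi\in\pP_2(k)\\\pi\vee\theta=1_k\\\pi\leq\Ker\ee}}
\frac{1}{|I_n|^{k/2-m+1}}\sum_{\substack{\alpha:[k]\to I_n\\\Ker\alpha\geq\pi}}\prod_{i=1}^{m}\tr(\p_{\alpha_i}).
\end{equation*}

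For each fixed $\pi$ I would stratify the inner sum according to $\sigma:=\Ker\alpha\geq\pi$. By Lemma \ref{Lemma1}(i), the product of traces vanishes unless every $\sigma|_{T_i}$ has only even-sized blocks, which is equivalent to the existence of a pair partition $\pi'\in\pP_2(k)$ with $\pi'\leq\theta$ and $\pi'\leq\sigma$. The finest such $\sigma$ is $\sigma=\pi\vee\pi'$, contributing of order $|I_n|^{|\pi\vee\pi'|}$ admissible $\alpha$'s. Moreover, applying Lemma \ref{Lemma1}(ii) to each $\tr(\p_{\alpha_i})$ with the pairing $\pi'|_{T_i}$ and to $\tr(\p_\alpha)$ with $\pi'$ itself produces the same sign, since $\pi'\leq\theta$ forbids crossings between different $T_i$'s and hence $cr(\pi')=\sum_i cr(\pi'|_{T_i})$. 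So on the admissible stratum one may replace $\prod_i\tr(\p_{\alpha_i})$ by $\tr(\p_\alpha)$.

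The main combinatorial step, and the one I expect to be the principal obstacle, is the bound $|\pi\vee\pi'|\leq k/2-m+1$ for any $\pi,\pi'\in\pP_2(k)$ with $\pi\vee\theta=1_k$ and $\pi'\leq\theta$; coarser $\sigma$'s then contribute $o(1)$, and only configurations with $|\pi\vee\pi'|=k/2-m+1$ survive. I would prove this by regarding $\pi\cup\pi'$ as a $2$-regular graph on $[k]$: it is a disjoint union of alternating cycles $C_1,\dots,C_c$ with $c=|\pi\vee\pi'|$ and length $2l_j$, so $\sum_j l_j=k/2$. Since $\pi'$-edges stay inside a single $T_i$, every $T_i$ touched by $C_j$ receives an even number $\geq 2$ of vertices, and the set $V_j\subset\{T_1,\dots,T_m\}$ of $\theta$-blocks visited by $C_j$ therefore satisfies $|V_j|\leq l_j$. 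On the other hand, $\pi\vee\theta=1_k$ forces the hypergraph with hyperedges $V_1,\dots,V_c$ to connect $\{T_1,\dots,T_m\}$, so $\sum_j(|V_j|-1)\geq m-1$. Combining yields $c\leq\sum_j l_j-(m-1)=k/2-m+1$.

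Finally, for each leading pair $(\pi,\pi')$ the admissible $\alpha$'s are parametrised by a distinct value in $I_n$ on each of the $k/2-m+1$ blocks of $\pi\vee\pi'$. The normalised sum
$
\frac{1}{|I_n|^{k/2-m+1}}\sum_{\alpha:\,\Ker\alpha\geq\pi\vee\pi'}\tr(\p_\alpha)
$
arises from Lemma \ref{Lemma2}(i) applied to $\pi'$ by iterating Lemma \ref{Lemma2}(ii) to impose the equalities of $\alpha$-values dictated by $\pi$: each merge of two blocks of $\pi'$ drops the normalising power of $|I_n|$ by one while leaving the limit unchanged, so after $k/2-c=m-1$ merges one obtains $q^{cr(\pi')}$. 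Summing over all valid $(\pi,\pi')$ then reproduces the right-hand side of \eqref{Cm-SYK}.
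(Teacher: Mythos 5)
Your overall architecture matches the paper's: multilinear expansion, Leonov--Shiryaev to reduce the cumulant to a sum over connecting pairings $\pi$, identification of a second pairing $\pi'\leq\theta$ coming from the non-vanishing of the traces, the bound $\vert\pi\vee\pi'\vert\leq k/2-m+1$, and a decoupling of the intervals via Lemma \ref{Lemma2}(ii). Your cycle-decomposition proof of the bound $\vert\pi\vee\pi'\vert\leq k/2-m+1$ is correct and in fact cleaner than the counting argument in the paper. But there is one genuine gap, and it sits exactly at the step you treat as routine rather than at the step you flag as the principal obstacle.

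The claim that ``by Lemma \ref{Lemma1}(i), the product of traces vanishes unless every $\sigma|_{T_i}$ has only even-sized blocks'' is false: Lemma \ref{Lemma1}(i) is a one-way implication and its converse fails. For instance, with $q_n=2$ and $R_1=(1,2)$, $R_2=(2,3)$, $R_3=(1,3)$, the product $\p_{R_1}\p_{R_2}\p_{R_3}$ is a unimodular multiple of $I$, so $\vert\tr(\p_{R_1}\p_{R_2}\p_{R_3})\vert=1$ although the three values are distinct. Worse, these ``odd'' configurations are not obviously subleading: the condition $\tr(\p_{R_1}\cdots\p_{R_r})\neq 0$ forces $R_r$ to be the symmetric difference of $R_1,\dots,R_{r-1}$, so the trivial count of such tuples is of order $\vert I_n\vert^{r-1}$ --- only one power of $\vert I_n\vert$ below the full stratum, which is exactly the claimed leading order. (Concretely, for $m=2$, $k_1=k_2=3$, a $\pi$ matching the two triples yields up to $\vert I_n\vert^{2}=\vert I_n\vert^{k/2-m+1}$ candidate terms with no $\pi'$ present.) So your stratification cannot discard them by a dimension count. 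The paper's proof spends most of its effort on precisely this point: it imports from \cite{Feng:2018tsw} the non-trivial estimate that the number of such tuples is $O(\vert I_n\vert^{r-1}n^{-1/2})$, and it is this extra factor $n^{-1/2}$ that pushes the odd configurations strictly below leading order and justifies restricting to $\Ker\alpha\geq\pi'$ for some $\pi'\in\pP_2(k)$ with $\pi'\leq\theta$. Without that input your argument does not establish the stated limit. A secondary, smaller issue: in the final decoupling you should verify, as the paper does, that the identifications dictated by $\pi$ can be undone one at a time --- i.e.\ that at each stage some interval has exactly two $\pi$-singletons and these are joined by a single block of $\pi'$ --- since Lemma \ref{Lemma2}(ii) only allows fixing the value of $\alpha$ on one block at a time.
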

Note in the above the distinction between $\varepsilon_1$ and $\varepsilon(1)$. The former is the restriction of the function $\varepsilon$ to the set $T_1$, whereas the latter is just the value of $\varepsilon$ at the point 1. Accordingly, on the left hand side of the equation, $H_{\varepsilon_1}$ denotes actually the product $H_{\varepsilon(1)}\cdots H_{\varepsilon(k_1)}$.

\begin{proof}
 By the multilinear property of the cumulant we have
 \begin{align}
  {\vert I_n\vert}^{m-1} 
     c_m\left(\tr(H_{\varepsilon_1}),\dots,\tr(H_{\varepsilon_m})\right)&= \nonumber\\
  \frac{1}{|I_n|^{\frac{k}{2}-m+1}}
     \sum_{\stackrel{\alpha_i:T_i\rightarrow I_n}{1\leq i\leq m}} 
         c_m(J^{\varepsilon_1}_{\alpha_1}&,\dots,J^{\varepsilon_m}_{\alpha_m})
             \tr(\p_{\alpha_1})\cdots\tr(\p_{\alpha_m}). 
 \end{align}

Using the formula for cumulants with products as entries and the Gaussianity
of the random variables $J$ we get

\begin{equation*}
c_m(J^{\varepsilon_1}_{\alpha_1},\dots,J^{\varepsilon_m}_{\alpha_m})=
\sum_{\substack{\pi\in\pP_2(k)\\ \pi\vee \theta =1_k}}
c_\pi(J_{\alpha(1)}^{(\ee(1))},\dots,J_{\alpha(k)}^{(\ee(k))}),
\end{equation*}
and the contribution $c_\pi(\cdots)$ is 1 if $\pi\leq\ker \alpha$ and $\pi\leq \ker\ee$, otherwise it is equal to zero. So we have
\begin{multline}\label{StartFluctuations}
  {\vert I_n\vert}^{m-1} \cdot
     c_m\left(\tr(H_{\varepsilon_1}),\dots,\tr(H_{\varepsilon_m})\right)\\=
  \frac{1}{|I_n|^{\frac{k}{2}-m+1}}
\sum_{\substack{\pi\in\pP_2(k)\\ \pi\vee \theta =1_k\\ \pi\leq\ker\ee}}
     \sum_{\substack{\alpha_i:T_i\rightarrow I_n \\ 1\leq i\leq m \\ \ker\alpha\geq\pi}} 
             \tr(\p_{\alpha_1})\cdots\tr(\p_{\alpha_m}).
 \end{multline}

In order to have non-vanishing contributions in this sum we need that the terms
$\tr(\p_{\alpha_i})$ are different from zero. This will give in leading order additional constraints on the relevant $\alpha$. Since $\vert \tr(\p_{\alpha_i})\vert$ can only take on the values $0$ or $1$, we will take the absolute value of the above equation,
\begin{multline}\label{StartFluctuationsvert}
  {\vert I_n\vert}^{m-1} \cdot
   \vert  c_m\left(\tr(H_{\varepsilon_1}),\dots,\tr(H_{\varepsilon_m})\right)\vert\\\leq
  \frac{1}{|I_n|^{\frac{k}{2}-m+1}}
\sum_{\substack{\pi\in\pP_2(k)\\ \pi\vee \theta =1_k\\ \pi\leq\ker\ee}}
     \sum_{\substack{\alpha_i:T_i\rightarrow I_n \\ 1\leq i\leq m \\ \ker\alpha\geq\pi}} 
             \vert\tr(\p_{\alpha_1})\vert\cdots\vert\tr(\p_{\alpha_m})\vert,
 \end{multline}
and we are trying to identify the leading order contributions to this sum. Note that according to our normalization we expect this to be of order
${|I_n|^{\frac{k}{2}-m+1}}$.

 $\pi$ is a pairing on $T=[1,k]$, but its restriction $\pi_i$ to the interval $T_i$ consists of pairs and singletons; some pairs of $\pi$ connect different intervals and thus those pairs decompose into two singletons under the restriction. In our sum over all $\alpha=(\alpha(1),\dots,\alpha(k))$ we can sum over all $\alpha(j)$ which correspond to pairs of the $\pi_i$; the corresponding factors $\p_R$ in $\tr(\p_{\alpha_i})$ appear then also in pairs and we will, by Lemma \ref{Lemma1}, not change the value of $\vert\tr(\p_{\alpha_i})\vert$ if we cancel the factors; on the other hand for each pair in $\pi$ we have $\vert I_n\vert$ many $R$ to sum over. Hence removing the pairs of all $\pi_i$ will change the problem to an equivalent one and it suffices to consider the situation where all restrictions $\pi_i$ consist just of singletons. Again, in this situation the most
canonical way to achieve that $\tr(\p_{\alpha_i})$ is different from zero is if every factor $\p_R$ of $\tr(\p_{\alpha_i})$ comes in pairs. However, there are now also contributions which are not of this form; however, we expect that they appear in smaller orders - that's what we want to show in the following.

We have now the situation that all $\pi_i$ consist only of singletons. There will be some $i$ for which $T_i$ consists only of one element - however, then $\tr(\p_{\alpha_i})$=0, since $\p_{\alpha_i}$ is then just one of the $\p_R$ and its trace vanishes. So such a situation does not contribute. Next, there will be some $i$ for which $T_i$ consists of exactly two elements. Then $\p_{\alpha_i} $ is equal to a product $\p_R\p_Q$, and the trace of this can only be different from zero if $R=Q$, i.e., we can restrict to summing over the situation that $\ker\alpha_i$ is a pair. As before we can remove this pair  (and identify the two corresponding half-edges) and reduce the problem thus further. So, iterating all these reductions, in the end we are left with a situation where each $T_i$ contains at least three elements. Now we have to estimate more seriously. Let us check what happens when we remove again one the intervals together will all its $\alpha's$. Note that the other ends of the removed singletons ly in some other intervals, thus the corresponding $\alpha$ there can not be summed over in the following, but its value will be fixed. Hence, on such an interval $T_j$ we will have to control in the following an expression of the form $\tr(\p_{\alpha_j}\p)$,
where $\p$ can be any arbitrary product of $\psi$'s and we sum over $\alpha_j$. Let us now control the effect of removing such a $T_j$. If $T_j$ contains $r$ elements, we are removing the sum over $2r$ many $\alpha$'s and we are reducing the number of intervals by 1. What is the contribution of the removed $\alpha$'s in the original sum? Since only terms with $\tr(\p_{\alpha_j}\p)\not=$ are relevant, we cannot sum over all the $r$ $\alpha$'s independently, but if we choose the values of the first $r-1$ ones, then the remaining $\alpha$ is fixed (where it could be that this fixed value is not a valid product of $q_n$-many different $\psi$'s), thus we have removed at most a contribution of $\vert I_n\vert^{r-1}$. So this reduction is again consistent with our highest order conjecture. We just continue with all our reductions since nothing is left. Of course, this is not good as we have then just estimated everything just by the hightest order. If we want to get rid of some terms we have to improve our estimate somewhere. Actually we do this in our first estimate for the case where all $T_i$ have at least three elements. Above we said that in the reduction step we removed the contribution of at most
$\vert I_n\vert^{r-1}$-many $\alpha$'s, because we have to make sure that
$\tr(\p_{\alpha_j}\p)$ is different from zero. In the first step we have, however, no extra $\p$ there (as this is only arising in the removals of three or more elements), thus we have to estimate the numer of $(R_1,\dots,R_m)\in I_n^m$ for which $\tr(\p_{R_1}\cdots \p_{R_m})\not=0$. This was done in \cite{Feng:2018tsw}, giving in this case as an improved (non-trivial) estimate the factor $c\vert I_n\vert^{r-1} n^{-1/2}$, for some absolute constant $c$. (We expect that the same might be true if we consider $\tr(\p_{\alpha_j}\p)$ instead of $\tr(\p_{\alpha_j})$, but this is not needed for our arguments.) Thus, if we arrive in our reduction at a situation where we have three or more singletons in an interval, we get an extra factor $n^{-1/2}$, which will let this distribution disappear asymptotically. But this means that we can restrict in our summation over the $\alpha$'s to a situation where we also have pairings in all $\ker\alpha_i$ - hence we have pairings $\pi'_i\in \pP_2(T_i)$ such that
$\pi'_i\leq \ker \alpha_i$ for all $1\leq i\leq m$. If we denote by $\pi'$ the union over the blocks of all $\pi'_i$ for all $i$, then we have of course
$\pi'\leq\ker\alpha$, hence our condition for the summation over $\alpha$ is now
$\pi\vee\pi'\leq \ker\alpha$. This gives for the summation over $\alpha$ the order $\vert I_n\vert^{\vert \pi\vee\pi'\vert}$.  
Note that we have always the inequality $\vert \pi\vee\pi'\vert\leq k/2-m+1$,
because $\pi'$ with its $k/2$ blocks lives on $m$ different intervals $T_i$, and those different intervals have, by the condition $\pi\vee\theta=1_k$, to be connected by $\pi$. For this we need at least $m-1$ pairs of $\pi$ to make those connections. This shows on one hand that the maximal order in the summation in \eqref{StartFluctuationsvert} can not be bigger than
$\vert I_n\vert^{k/2-m+1}$, and on the other hand it implies that we need for $\pi'$ the additional constraint that
$\vert \pi\vee\pi'\vert=k/2-m+1$ to achieve this leading order. Thus, going back to \eqref{StartFluctuations}, we have seen that asymptotically in $n$ the leading order of 
  ${\vert I_n\vert}^{m-1} \cdot
     c_m\left(\tr(H_{\varepsilon_1}),\dots,\tr(H_{\varepsilon_m})\right)$
is given by
\begin{equation}\label{eq:leading-order}
  \frac{1}{|I_n|^{\frac{k}{2}-m+1}}
\sum_{\substack{\pi\in\pP_2(k)\\ \pi\vee \theta =1_k\\ \pi\leq\ker\ee}}
\sum_{\substack{\pi'\in\pP(k)\\ \pi'\leq\theta\\
\vert \pi\vee\pi'\vert=k/2-m+1}}
     \sum_{\substack{\alpha:[k]\rightarrow I_n \\ \ker\alpha\geq\pi\vee\pi'}} 
             \tr(\p_{\alpha_1})\cdots\tr(\p_{\alpha_m}). 
\end{equation}

We have now to calculate the asymptotics of \eqref{eq:leading-order}. One has to note that the restrictions $\alpha_i$ to the intervals $T_i$ are not independent from each other because some of their values are coupled by the condition $\ker\alpha\geq \pi$. Without this condition and without the power $m-1$ at $\vert I_n\vert$, this sum would just decouple into a product of sums for each of the
$\tr(\p_{\alpha_i})$. Our goal is to see that this decoupling effectively still happens in leading order. For this it is important that, by the condition $\vert \pi\vee\pi'\vert=k/2-m+1$, we have as few as possible connections, under the connectedness condition, between the different $T_i$. In order to understand what this implies, let us have a closer look on the $\pi$ appearing in \eqref{eq:leading-order} as follows. The restrictions $\pi_i$ to the $T_i$ consist of pairs and singletons. Let us denote the total number of singletons by $k_s$ and the total number of pairs by $k_p$; then we have $k=k_s+2 k_p$. We claim now that we must find at least one $\pi_i$ which has only two singletons. Namely, assume to the contrary that each $\pi_i$ has at least four singletons, then we have that $k_s\geq 4m$. But then 
$$\vert \pi\vee\pi'\vert=k/2-m+1=k_p+k_s/2-m+1
\geq k_p +k_s/2-k_s/4+1=k_p+k_s/4+1.$$ 
On the other hand, each block of $\pi\vee\pi'$ contains at least two elements, and if it contains a singleton then it contains at least four elements; hence there can be at most $k_p+k_s/4$ blocks of $\pi\vee\pi'$. This contradiction shows that we cannot start with all intervals having at least four singletons. Thus there must be at least one $\pi_i$ which has only two singletons. Similar as above, one also sees that those two singletons of $\pi_i$ must be connected by a block of $\pi'_i$ and that for the other elements in $T_i$ the blocks of $\pi_i$ and the blocks of $\pi'_i$ have to agree, in order to achieve the maximal value of 
$\vert \pi\vee\pi'\vert$.
But then in our summation over $\alpha_i$ we are in the situation of the restricted summation, Equation \eqref{eq:sum-hyper-restr}, of Lemma \ref{Lemma2}: we have one block $V$ of $\pi'$ for which $\alpha(V)$ is determined by values from other intervals, but otherwise we sum just over $\ker \alpha_i\geq\pi'$. Since this summation is, by Lemma \ref{Lemma2}, independent of the fixed value of $\alpha(V)$, we can decouple this value from the outside intervals and just sum over all possibilities for $\alpha(V)$. This summation will then give an extra factor $\vert I_n\vert$. We can iterate this now, by finding another $T_j$ of the remaining intervals, for which $\pi_j$ (after having removed $T_i$) has only two singletons and decouple the summation for this $T_j$ as before from the rest.
In each of the $m-1$ decoupling steps we get an extra factor $\vert I_n\vert$ and in the end we have removed all the constraints $\ker\alpha\geq \pi$ and are left with just doing summations over the $T_i$'s separately according to the constraint $\ker\alpha\geq \pi'$; i.e., \eqref{eq:leading-order} is the same as

\begin{align*}
  &\frac{1}{|I_n|^{\frac{k}{2}}}
\sum_{\substack{\pi\in\pP_2(k)\\ \pi\vee \theta =1_k\\ \pi\leq\ker\ee}}
\sum_{\substack{\pi'\in\pP(k)\\ \pi'\leq\theta\\
\vert \pi\vee\pi'\vert=k/2-m+1}}
     \sum_{\substack{\alpha:[k]\rightarrow I_n \\ \ker\alpha\geq\pi'}} 
             \tr(\p_{\alpha_1})\cdots\tr(\p_{\alpha_m})\\[0.4em]
&=
\sum_{\substack{\pi,\pi'\in\pP_2(k)\\ \pi\vee \theta =1_k, \pi'\leq\theta\\ \pi\leq\ker\ee\\
\vert \pi\vee\pi'\vert=k/2-m+1}}
     \Bigl(\frac{1}{|I_n|^{\frac{\vert T_1\vert}{2}}}
\sum_{\substack{\alpha_1:[\vert T_1\vert]\rightarrow I_n \\ \ker\alpha_1\geq\pi_1'}} 
             \tr(\p_{\alpha_1})\Bigr)\cdots
\Bigl(\frac{1}{|I_n|^{\frac{\vert T_m\vert}{2}}}
\sum_{\substack{\alpha_1:[\vert T_m\vert]\rightarrow I_n \\ \ker\alpha_m\geq\pi_m'}} 
\tr(\p_{\alpha_m})\Bigr)\\[0.4em]
&\to\sum_{\substack{\pi,\pi'\in\pP_2(k)\\ \pi\vee \theta =1_k, \pi'\leq\theta\\ \pi\leq\ker\ee\\
\vert \pi\vee\pi'\vert=k/2-m+1}}
  q^{cr(\pi_1')}\cdots q^{cr(\pi_m')}
=\sum_{\substack{\pi,\pi'\in\pP_2(k)\\ \pi\vee \theta =1_k, \pi'\leq\theta\\ \pi\leq\ker\ee\\
\vert \pi\vee\pi'\vert=k/2-m+1}}
  q^{cr(\pi')}\qquad\text{for $n\to\infty$}
\end{align*}
\end{proof} 

\begin{remark}
\begin{enumerate}
\item
In the proof of Theorem \ref{FluctuationsThm} we have seen the nature of the pairs $(\pi,\pi')$ of pair-partitions which contribute to the fluctuations. Each block of $\pi'$ lives on one of the $T_i$, whereas $\pi$ has also some blocks which connect different intervals. In order to get the maximal number of blocks of $\pi\vee\pi'$ we need that the blocks of $\pi$ which do not connect different intervals are also blocks of $\pi'$, The following picture shows a typical contribution for a $6^{th}$ order cumulant. It is natural to draw the intervals $T_i$ as circles, since each of them corresponds to a trace. The blocks of $\pi'$ are drawn as dashed lines within the circles, the blocks of $\pi$ as solid lines outside of the the circles. 
One sees that the effect of those blocks of $\pi$ which connect different circles is to collect several blocks of $\pi'$ together; thus one can also identify the pair $(\pi,\pi')$ with a partition of the blocks of $\pi'$. In this form our formula for the fluctuations of the sparse SYK random matrix looks quite similar to some of the  terms showing up in the description of the fluctuations of a GUE random matrix in terms of partitioned permutations \cite{CMSS}. This suggests that there might be a more general theory of higher order freeness which addresses both the usual GUE as well as sparse random matrices. We will follow up on this connection in future investigations.

\vskip-1cm\hskip-3cm
\begin{tikzpicture}[baseline={(-3,-0.5)}]

   \begin{scope}[shift={(-5,-5cm)},scale=0.4, line width=3pt]
\draw (0,0) circle (3cm);
\node (n1) at (0*360:3cm) {};
\node (n2) at (360/6:3cm) {};
\node (n3) at (2*360/6:3cm) {};
\node (n4) at (3*360/6:3cm) {};
\node (n5) at (4*360/6:3cm) {};
\node (n6) at (5*360/6:3cm) {};
\fill[black] (n1) circle[radius=10pt];
\fill[black] (n2) circle[radius=10pt];
\fill[black] (n3) circle[radius=10pt];
\fill[black] (n4) circle[radius=10pt];
\fill[black] (n5) circle[radius=10pt];
\fill[black] (n6) circle[radius=10pt];

\draw (8,8) circle (3cm);
\node (m1) at  ([xshift=8cm,yshift=8cm] 0*360/4:3cm) {};
\node (m2) at  ([xshift=8cm,yshift=8cm] 1*360/4:3cm) {};
\node (m3) at  ([xshift=8cm,yshift=8cm] 2*360/4:3cm) {};
\node (m4) at  ([xshift=8cm,yshift=8cm] 3*360/4:3cm) {};
\fill[black] (m1) circle[radius=10pt];
\fill[black] (m2) circle[radius=10pt];
\fill[black] (m3) circle[radius=10pt];
\fill[black] (m4) circle[radius=10pt];

\draw (-8,8) circle (3cm);
\node (a1) at  ([xshift=-8cm,yshift=8cm] 0*360/4:3cm) {};
\node (a2) at  ([xshift=-8cm,yshift=8cm] 1*360/4:3cm) {};
\node (a3) at  ([xshift=-8cm,yshift=8cm] 2*360/4:3cm) {};
\node (a4) at  ([xshift=-8cm,yshift=8cm] 3*360/4:3cm) {};
\fill[black] (a1) circle[radius=10pt];
\fill[black] (a2) circle[radius=10pt];
\fill[black] (a3) circle[radius=10pt];
\fill[black] (a4) circle[radius=10pt];

\draw (-8,-8) circle (3cm);
\node (c1) at  ([xshift=-8cm,yshift=-8cm] 0*360/4:3cm) {};
\node (c2) at  ([xshift=-8cm,yshift=-8cm] 1*360/4:3cm) {};
\node (c3) at  ([xshift=-8cm,yshift=-8cm] 2*360/4:3cm) {};
\node (c4) at  ([xshift=-8cm,yshift=-8cm] 3*360/4:3cm) {};
\fill[black] (c1) circle[radius=10pt];
\fill[black] (c2) circle[radius=10pt];
\fill[black] (c3) circle[radius=10pt];
\fill[black] (c4) circle[radius=10pt];

\draw (8,-8) circle (3cm);
\node (b1) at  ([xshift=8cm,yshift=-8cm] 0*360/6:3cm) {};
\node (b2) at  ([xshift=8cm,yshift=-8cm] 1*360/6:3cm) {};
\node (b3) at  ([xshift=8cm,yshift=-8cm] 2*360/6:3cm) {};
\node (b4) at  ([xshift=8cm,yshift=-8cm] 3*360/6:3cm) {};
\node (b5) at  ([xshift=8cm,yshift=-8cm] 4*360/6:3cm) {};
\node (b6) at  ([xshift=8cm,yshift=-8cm] 5*360/6:3cm) {};
\fill[black] (b1) circle[radius=10pt];
\fill[black] (b2) circle[radius=10pt];
\fill[black] (b3) circle[radius=10pt];
\fill[black] (b4) circle[radius=10pt];
\fill[black] (b5) circle[radius=10pt];
\fill[black] (b6) circle[radius=10pt];

\draw (-16,0) circle (3cm);
\node (d1) at  ([xshift=-16cm,yshift=0cm] 0*360/4:3cm) {};
\node (d2) at  ([xshift=-16cm,yshift=0cm] 2*360/4:3cm) {};
\fill[black] (d1) circle[radius=10pt];
\fill[black] (d2) circle[radius=10pt];

\draw[violet, dashed] (n1) to[out=200,in=-45] (n3);
\draw[red, dashed] (n2)  to[out=270,in=-45]  (n4);
\draw[dashed, blue] (n5) to[out=80,in=150] (n6);

\draw [violet] (n3) to[out=100,in=150] (m2);
\draw [violet] (n1) to[out=45,in=270] (m4);

\draw [red] (n2) to[out=100,in=50] (a1);
\draw [red] (n4) to[out=150,in=200] (a3);

\draw [blue] (c1) to[out=50,in=250] (n5);
\draw [blue] (c4) to[out=250,in=250] (b5);
\draw [blue] (n6) to[out=300,in=150] (b3);

\draw [dashed, blue] (c1) to[out=200,in=45] (c4);
\draw [dashed,green] (c2) to[out=280,in=45] (c3);
\draw [blue, dashed] (b3) to[out=300,in=80] (b5);
\draw [dashed] (b1)  to[out=170,in=20] (b4);
\draw [dashed] (b2)  to[out=250,in=100] (b6);

\draw[bend right=30] (b2) to[out=150,in=400,distance=5cm] (b6);
\draw[bend right=30] (b1) to[out=150,in=400,distance=12cm] (b4);

\draw [dashed, green] (d1) to[out=170,in=10] (d2);

\draw [green] (c2) to[out=50,in=0] (d1);
\draw [green] (c3) to[out=230,in=180] (d2);

\draw[dashed] (m1) to[out=200,in=-20] (m3);
\draw[violet, dashed] (m2) to[out=300,in=80] (m4);

\draw[bend right=30] (m1) to[out=280,in=300,distance=12cm] (m3);

\draw [dashed, red] (a1) to[out=200,in=20] (a3);
\draw [dashed] (a2) to[out=290,in=30] (a4);
\draw[bend right=80] (a2) to[out=200,in=300,distance=12cm] (a4);

\end{scope}
\end{tikzpicture}

\item
One should notice that in the contribution of a configuration $(\pi,\pi')$ to \eqref{Cm-SYK} only $\pi'$ is involved, via its number of crossings. Those number of crossings is a well-defined quantity since its calculation factorizes into the number of crossings for each of the circles. For $\pi$ we do not have to care about its crossings - which is a good thing as the number of crossings of a multi-annular pairing is not really well defined. On the other hand, $\pi$ governs the constraint $\pi\leq\ker\ee$.
\end{enumerate}
\end{remark}

\section{Proof of the lemmas}\label{ProofLemmas}
\begin{proof}[Proof of Lemma \ref{Lemma0}]
For $R=(i_1,\dots,i_{q_n})\in I_n$, a direct computation yields
\begin{align*}
\p^2_R&=(\psi_{i_{1}}\cdots\psi_{i_{q_n}})(\psi_{i_{1}}\cdots\psi_{i_{q_n}})
        i^{2\lfloor\frac{q_n}{2}\rfloor}\\
      &=(-1)^{\lfloor\frac{q_n}{2}\rfloor}
        (-1)^{q_n-1}(-1)^{q_n-2}\cdots(-1)^{q_n-q_n} I\\
      &=(-1)^{\lfloor\frac{q_n}{2}\rfloor}(-1)^{\frac{q_n(q_n-1)}{2}}\\&=I,
\end{align*}
where $I$ is the identity matrix. In the last equation we used that 
$\lfloor\frac{q_n}{2}\rfloor$ and $\frac{q_n(q_n-1)}{2}$ have the same parity.\\
Now let $R=(i_1,\dots,i_{q_n})$ and $Q=(j_1,\dots,j_{q_n})$ be in $I_n$, observe that
\[
(\psi_{i_{1}}\cdots\psi_{i_{q_n}})\psi_{j_1}=\left\{
\begin{array}{ccc}
 \psi_{j_1}(\psi_{i_1}\cdots\psi_{i_{q_n}})(-1)^{q_n}&\mbox{ if } & j_1\notin\{i_1,\dots,i_{q_n}\}\\
  & & \\
 \psi_{j_1}(\psi_{i_1}\cdots\psi_{i_{q_n}})(-1)^{q_n+1}& \mbox{ if } & j_1\in\{i_1,\dots,i_{q_n}\}
\end{array}\right. 
\]
Then by iteration we get
\begin{align*}
\p_R\p_Q&=i^{2\lfloor\frac{q_n}{2}\rfloor}
         (\psi_{i_1}\cdots\psi_{i_{q_n}})(\psi_{j_1}\cdots\psi_{j_{q_n}})\\
        &=i^{2\lfloor\frac{q_n}{2}\rfloor}
         (-1)^{q_n^2+|Q\cap R|}
         (\psi_{j_1}\cdots\psi_{j_{q_n}})(\psi_{i_1}\cdots\psi_{i_{q_n}})\\
        &=(-1)^{q_n+|Q\cap R|}\p_Q\p_R,
\end{align*}
where $|Q\cap R|$ stands for the number of common indices in $Q$ and $R$. 
\end{proof}

\begin{proof}[Proof of Lemma \ref{Lemma1}]
For $\alpha:[k]\rightarrow I_n$ with $\Ker \alpha=\{V_1,\cdots,V_r\}$, it follows from 
the anti-commutation relation (\ref{psi_commutation}) that 
\begin{align}
 \p_{\alpha(1)}\cdots\p_{\alpha(k)}
   &=\pm \p^{|V_1|}_{\alpha(V_1)}\cdots\p^{|V_r|}_{\alpha(V_r)}. \label{Boldprod1}
\end{align}
The notation $\alpha(V)$ was introduced in (\ref{N3}).
\begin{itemize}
\item[i)] It follows from (\ref{psi_square}) and (\ref{Boldprod1}) that, if
            the $|V_1|,\dots,|V_r|$ are all even, then $\p^{|V_i|}_{\alpha(V_i)}=I$.
\item[ii)] We now assume $\Ker\alpha\geq \pi\in\pP_2(k)$ and we want to determine the sign 
           in (\ref{Boldprod1}). 
           If $\pi\in NC_2(k):=\{\sigma\in\pP_2(k)|cr(\sigma)=0\}$,
           then $\p_\alpha=I$. This comes from the iterative characterization of elements          
           in $NC_2(k)$; see, for example, \cite[Remark: 9.2]{book:833269}.
           If $\pi\notin NC_2(k)$, then we need to apply the relation 
           (\ref{psi_commutation}) to each crossing in $\pi$, and reduce 
           $\p_{\alpha(1)}\cdots\p_{\alpha(k)}$ to the identity. In this processes we 
           obtain $(-1)^{q_n+|\alpha(V)\cap\alpha(W)|}$ for each pair $\{V,W\}$ of
           crossing blocks in $\pi$.

Note that for the above arguments it is not necessary that $\ker\alpha$ is itself a pairing.
\end{itemize} 
\end{proof}

\begin{proof}[Proof of Lemma \ref{Lemma2}]
\begin{itemize}
\item[i)]
This is proved in \cite{Erd-2014,Feng:2018zsx}. Note that the case $q=0$, proved in \cite{Feng:2018zsx}, is more involved than the other cases, because then the possible intersections between $Q$ and $R$ in $I_n$ are typically much larger than in the other cases, and thus harder to control.

\item[ii)] It suffices to show that for a block $V$ the restricted sum in \eqref{eq:sum-hyper-restr} is independent of the choice of $\alpha(V)=R$. For two possible choices $R_1=(i_1,\dots,i_{q_n}), R_2=(j_1,\dots,j_{q_n})\in I_n$ we take a bijection 
$\tau:[n]\to[n]$ which maps $i_k$ to $j_k$ for $1\leq k\leq q_n$ and is arbitrary otherwise. We extend this to a map $\tau:I_n\to I_n$ by declaring $\tau(R)$ for $R=(t_1,\dots,t_{q_n})\in I_n$ as the tuple in $I_n$ which we get by ordering the numbers $\tau(t_1),\dots,\tau(t_{q_n})$. This $\tau$ maps then $R_1$ to $R_2$ and has the property that it preserves the size of intersections, i.e., 
$\vert Q\cap R\vert=\vert\tau(Q)\cap \tau(R)\vert$ for all $Q,R\in I_n$ and thus also in particular $\tr(\p_\alpha)=\tr(\p_{\tau\circ\alpha})$. 
If we apply this bijection to our restricted sum, then the restricted sum for $R_1$ is transformed into the restricted sum for $R_2$:
\begin{align*}
\frac{1}{|I_n|^{\sfrac{(k-2)}{2}}}
         \sum_{\substack{\alpha:[k]\rightarrow I_n \\ \Ker\alpha\geq\pi \\
\alpha(V)=R_2}}
            \tr(\p_\alpha)
&=
\frac{1}{|I_n|^{\sfrac{(k-2)}{2}}}
         \sum_{\substack{\tau\circ\beta:[k]\rightarrow I_n \\ \Ker\tau\circ \beta\geq\pi \\
\tau\circ\beta(V)=R_2}}
            \tr(\p_{\tau\circ\beta})\\
&=
\frac{1}{|I_n|^{\sfrac{(k-2)}{2}}}
         \sum_{\substack{\beta:[k]\rightarrow I_n \\ \Ker\beta\geq\pi \\
\beta(V)=R_1}}
            \tr(\p_{\beta})
 \end{align*}
\end{itemize}
\end{proof}

\section{On the analytic description of the multivariate $q$-Gaussian distribution}

We have established in Theorem \ref{MainTheorem} that one can describe the limit of independent SYK models by concrete operators $s_q(h)$ on the $q$-deformed Fock space. This allows to give operator realizations, via \eqref{limit}, for the limits of expectation values in the SYK model. Unfortunately, this does not imply that we have in the case $p>1$ a good analytic description of the limit object. The relevant analytic object in this context is the operator-valued Cauchy transform, which is defined as follows. Consider $X_i:=s_q(h_i)$ ($i=1,\dots,p$), for some orthonormal $h_1,\dots,h_p$. In order to deal with the distribution of the tuple $(X_1,\dots,X_p)$ we put those $p$ operators as diagonal elements into an $p\times p$ matrix
\begin{equation}
X=\begin{pmatrix}\label{eq:matrix-X}
X_1& 0&\dots&0\\
0&X_2&\dots&0
\\
\vdots&\vdots&\ddots&\vdots\\
0&0&\hdots&X_p
\end{pmatrix},
\end{equation}
put $\cB:=M_p(\CC)$,
and then define the operator-valued Cauchy transform $G_X=(G_X^{(k)})_{k\in\NN}$ of this
as the collection of all functions
\begin{align*}
G_X^{(k)}:H^+(M_k(\cB))&\to H^-(M_k(\cB))\\
z&\mapsto \text{id}\otimes\tau [(z-1\otimes X)^{-1}],
\end{align*}
where $H^\pm$ denote the upper and lower, respectively, halfplane in the considered operator algebras (given by requiring that the imaginary part of the operators 
are strictly positive and negative, respectively). For more information and precise definitions of such non-commutative functions, we refer to \cite{KV,Jek}.
This Cauchy transform is a well defined analytic function which contains all information about the distribution of the tuple $(X_1,\dots,X_p)$ -- in particular, the expectation values as in \eqref{limit} can be recovered as the coefficients in the Taylor expansion of those functions about infinity. The problem is that we do not have any nice concrete analytic description of this function. In the case $p=1$ of just one operator $s_q$ (where we know quite a bit about the limit distribution) one has, for example, a good continued fraction expansion of the Cauchy transform $G$ (which in this case is just an ordinary analytic function from $\CC^+$ to $\CC^-$) in the form
$$G(z)=\cfrac 1{z-\cfrac 1{z- \cfrac {1+q}{ z-\cfrac {1+q+q^2}{z-\dots}}}}.$$
The naive guess that one might also have a corresponding operator-valued version of such a continued fraction expansion is unfortunately not true. Whereas in the scalar case any distribution has a continued fraction expansion for its Cauchy transform, this does not hold any more in the operator-valued setting (see \cite{AW}), and it is easy to check that the matrix $X$ in \eqref{eq:matrix-X} for the $q$-Gaussian distribution is one of the basic examples where this fails, 

This absence of a nice analytic description of the multivariate $q$-Gaussian distribution is the main reason that our progress on a deeper understanding of this distribution (e.g., for addressing free entropy or Brown measure questions in this context) is quite slow. Also the calculations of the 2- and 4-point functions of the SYK model in \cite{Berk1,Berk2} might benefit from such a better analytic understanding. 
It remains to be seen whether the link between our dynamical version of the SYK model and the $q$-Brownian motion leads to progress on such questions.

\section*{Acknowledgements}
R.S. thanks Li Han, Renjie Feng, and Micha Berkooz for discussions about the SYK model.
We also thank the anonymous reviewers whose comments helped to improve  the manuscript substantially.

This work has been supported by the ERC Advanced Grant NCDFP 339760 and by the SFB-TRR 195, Project I.11.

\bibliographystyle{srt}

\end{document}